\numberwithin{equation}{section}
\newtheorem{theorem}{\indent Theorem}[section]
\newtheorem{lemma}{\indent Lemma} [section]
\newtheorem{corollary}{\indent Corollary} [section]
\newtheorem{definition}{\indent Definition} [section]
\newtheorem{remark}{\indent Remark} [section]
  \theoremstyle{nonumberplain}
    \newenvironment{proof}[1][Proof]{\textbf{#1}~~}{\hfill $\square$ \bigskip}
\def\rr{\mathbb{R}}
\def\rn{{\rr}^n}
\def\loc{{\rm loc}}
\begin{document}

\title{\bf Commutators of the maximal and sharp functions
   with weighted Lipschitz functions
    \footnotetext{* Pu Zhang (Corresponding author), ~Email:  puzhang@sohu.com}
     \footnotetext{~~ Xiaomeng Zhu, ~Email: zhuxiaomeng11@163.com}
      \footnotetext{1 Department of Mathematics, Mudanjiang Normal University, Mudanjiang 157011, China}\\
         \footnotetext{2 Tangshan Qishi Art Middle School, Tangshan 063700, China}
           }

\author{Pu Zhang$^{1,*}$   ~and Xiaomeng Zhu$^{1,2}$ }

\date{ }
\maketitle

\begin{center}\begin{minipage}{14cm}

{\bf Abstract}
Let $M$ be the Hardy-Littlewood maximal function.
Denote by $M_b$ and $[b,M]$ the maximal and the nonlinear
commutators of $M$ with a function $b$.
  The boundedness of $M_b$ and $[b,M]$ on weighted Lebesgue spaces
  are characterized
 when the symbols $b$ belong to weighted Lipschitz (weighted Morrey-Campanato) spaces.
  Some new characterizations for weighted Lipschitz spaces are obtained.
Similar results are also established for the nonlinear commutator
of the sharp function.

{\bf Keywords:}~  Hardy-Littlewood maximal function;
 sharp function; commutator; $A_p$ weight; weighted Lipschitz space;
  weighted Morrey-Campanato space

{\bf Mathematics subject classification (2020):}~ 42B25, 42B20, 26A16, 47B47

\end{minipage}
\end{center}

\section{Introduction and Main Results}

Let $T$ be the  classical singular integral operator. The commutator
$[b,T]$ generated by $T$ and a suitable function $b$ is given by
\begin{equation} \label{equ.1.1}    
[b,T](f)(x) = T\big((b(x)-b)f\big)(x)
       = b(x)T(f)(x)-T(bf)(x).
\end{equation}

A well known result states that $[b,T]$ is bounded on $L^p(\rn)$
for $1<p<\infty$ if and only if $b\in {BMO(\rn)}$
(see Coifman et al. \cite{crw} and Janson \cite{j}).
Another kind of boundedness for $[b,T]$ was given by Janson \cite{j}.
It was proved that $[b,T]$ is bounded from $L^p(\rn)$ to $L^q(\rn)$
for $1<p<q<\infty$ if and only if $b\in {Lip_{\beta}(\rn)}$ with
$0<\beta=n(1/p-1/q)<1$,
where ${Lip_{\beta}(\rn)}$ is the Lipschitz space of order $\beta$.
In 2008, Hu and Gu \cite{hg} considered the  weighted boundedness
of commutator $[b,T]$ when $b$ belongs to weighted Lipschitz spaces.

On the other hand, mapping properties of commutators
generated by the Hardy-Littlewood maximal function and
suitable locally integrable functions have been studied intensively
by many authors in recent years.
 See \cite{agkm, bmr, fg, gma, gul, ms, yyl, zp2017, zp2019, zsw, zw-cmj, zw-mia, zws}
 for instance.
We would like to mention that the regularity and continuity
of commutators of maximal functions were first studied by
Liu, Xue and the first author in \cite{lxz1} and \cite{lxz2}.

As usual, a cube $Q\subset \rn$ always means its sides
parallel to the coordinate axes.
Denote by $|Q|$ the Lebesgue measure and by $\chi_Q$
the characteristic function of $Q$.
 For $f\in {L}_{\loc}^{1}(\rn)$, we write $f_Q={|Q|}^{-1}\int_Q f(x)dx$.
The Hardy-Littlewood maximal function $M$ is defined by
$$M(f)(x)=\sup_{Q\ni x} \frac{1}{|Q|} \int_{Q} |f(y)| dy,
$$
and the sharp function $M^{\sharp}$, introduced by
Fefferman and Stein \cite{fs}, is given by
$$M^{\sharp}(f)(x)=\sup_{Q\ni x}\frac{1}{|Q|} \int_{Q}|f(y)-f_{Q}|{d}y,
$$
where the supremum is taken over all cubes $Q\subset \rn$
containing $x$.

Similar to (\ref{equ.1.1}), we can define two different kinds of
commutators of the Hardy-Littlewood maximal function as follows.

\begin{definition}       \label{def.com} 
Let $b$ be a locally integrable function. The maximal commutator
generated by $M$ and $b$ is given by
$$M_b(f)(x)= M\big((b(x)-b)f\big)(x)
=\sup_{Q\ni x} \frac{1}{|Q|} \int_{Q} |b(x)-b(y)||f(y)|dy,
$$
where the supremum is taken over all cubes $Q\subset\rn$ containing
$x$.

The nonlinear commutator generated by $M$ and $b$ is defined by
$$[b,M](f)(x)=b(x)M(f)(x)-M(bf)(x).
$$
\end{definition}

Similarly, we can also define the nonlinear commutator of
$M^{\sharp}$ and $b$ by
$$[b,M^{\sharp}](f)(x)= bM^{\sharp}(f)(x) -M^{\sharp}(bf)(x).
$$

Operators $M_b$ and $[b,M]$ essentially
differ from each other. Obviously, $M_b$ is positive and
sublinear, but $[b,M]$ and $[b,M^{\sharp}]$ are neither positive nor sublinear.
 The operator $[b,M]$ can
be used in studying the product of a function in $H^1$ and a
function in $BMO$, see \cite{bijz} for instance.

The commutator $[b,M]$ was first studied by Milman and Schonbek \cite{ms}.
In 2000, Bastero, Milman and Ruiz \cite{bmr} studied the necessary
and sufficient condition for the boundedness of $[b,M]$ and
 $[b,M^{\sharp}]$ on $L^p$ spaces. In 2009, Zhang and Wu
\cite{zw2009} considered the same problems for the commutator of
the fractional maximal function. The results were extended to
variable Lebesgue spaces in \cite{zw-cmj} and \cite{zw-mia}.

Recently, Zhang \cite{zp2017} gave some necessary and sufficient conditions
for the boundedness of $M_b$ and $[b,M]$ on Lebesgue and Morrey
spaces when the symbols $b$ belong to Lipschitz space, by which
some new characterizations of Lipschitz functions are given.
Some of the results  
 were extended to variable
exponent Lebesgue spaces in \cite{zp2019}, \cite{zsw} and \cite{yyl},
and to the context of Orlicz spaces in \cite{gul}, \cite{gd}, \cite{gdh}
and \cite{zws}.

In this paper, we will consider the weighted boundedness
for the commutators $M_b$, $[b,M]$ and $[b,M^{\sharp}]$
when the symbols $b$ belong to weighted Lipschitz spaces.
In virtue of the boundedness of the commutators,
 some new characterizations for weighted Lipschitz spaces are given.
  To state the results, we first give some definitions and notations.

A weight will always mean a nonnegative locally integrable function.
As usual, we denote by $A_p$ $(1\le{p}\le \infty)$ the Muckenhoupt
weights classes (see \cite{gr}, \cite{gra} and \cite{stein} for details).
 For a weight $\mu$ and a measurable set $E$ in $\rn$, we write
$$\|f\|_{L^p(\mu)}=\left(\int_{\rn}|f(x)|^p \mu(x) dx\right)^{1/p}~ \mbox{and}~~
 \mu(E)=\int_{E} \mu(x)dx.
$$

Following \cite{g}, we difine the weighted Lipschitz function space.
See also \cite{hg} and \cite{t}.

\begin{definition}     \label{def.wlip}        
Let $1\le{p}\le\infty$, $0<\beta<1$ and $\mu\in{A_{\infty}}$.
The weighted Lipschitz space, denoted by $Lip_{\beta,\mu}^p$, is given by
$$Lip_{\beta,\mu}^p
  =\Big\{f \in{L_{\rm{loc}}^1(\rn)}: \|f\|_{Lip_{\beta ,\mu}^p}< \infty \Big\},
$$
where
$$\|f\|_{Lip_{\beta ,\mu}^p} = \sup_Q \frac{1}{\mu(Q)^{\beta /n}}
 {\bigg(\frac{1}{\mu(Q)} \int_Q |f(x)-f_Q|^p}\mu (x)^{1-p}dx\bigg)^{1/p}.
$$
here and below,
 ``$\sup\limits_Q$'' always means the supremum is taken over all
 cubes $Q\subset \rn$.
\end{definition}

Modulo constants, $Lip_{\beta,\mu}^p$ is a Banach space
with respect to the norm $\|\cdot\|_{Lip_{\beta ,\mu}^p}$.
We simply write $Lip_{\beta,\mu} =Lip_{\beta ,\mu}^1$.
 Obviously the space $Lip_{\beta,\mu}^p$ is
a special case of the so-called weighted Morrey-Campanato spaces
studied by Garc\'ia-Cuerva \cite{g} and other authors, see for example
\cite{t}, \cite{yy} and \cite{sdh}, and references therein.

Given a cube $Q_0$, we need the following maximal function
 with respect to $Q_0$,
$$M_{Q_0}(f)(x) =\sup_{Q_0\supseteq{Q}\ni{x}}
\frac{1}{|Q|}\int_{Q}|f(y)|dy,
$$
where the supremum is taken over all cubes $Q$ such that
$x\in{Q\subseteq{Q_0}}$.

Our results can be stated as follows.

\begin{theorem}\label{thm.1.1}  
Let $b$ be a locally integrable function, $\mu\in {A_1}$ and $0<\beta<1$.
Suppose that $1<p<n/\beta$ and $1/q=1/p-\beta /n$.
Then the following assertions are equivalent:

{\rm(1)} $b \in Li{p_{\beta ,\mu}}.$

{\rm(2)} $M_b$ is bounded from ${L^p}(\mu )$ to ${L^q}(\mu^{1-q})$.
\end{theorem}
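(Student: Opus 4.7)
The plan is to prove the two implications separately. The direction $(2)\Rightarrow(1)$ is the straightforward one and will be handled by testing $M_b$ against characteristic functions of cubes; the direction $(1)\Rightarrow(2)$ is the heart of the matter and will come from a pointwise domination of $M_b(f)$ by a weighted fractional maximal operator with respect to the measure $\mu\,dx$.

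For $(2)\Rightarrow(1)$, fix a cube $Q$ and take $f=\chi_Q$. For any $x\in Q$, the cube $Q$ itself is admissible in the supremum defining $M_b(\chi_Q)(x)$, hence
\[
M_b(\chi_Q)(x)\ge \frac{1}{|Q|}\int_Q|b(x)-b(y)|\,dy\ge |b(x)-b_Q|.
\]
The hypothesis gives $\|M_b(\chi_Q)\|_{L^q(\mu^{1-q})}\le C\|\chi_Q\|_{L^p(\mu)}=C\mu(Q)^{1/p}$. Writing $|b-b_Q|=(|b-b_Q|\mu^{(1-q)/q})\mu^{(q-1)/q}$ and applying H\"older's inequality with exponents $q,q'$, together with the arithmetic identity $1/p+1/q'=1+\beta/n$, one obtains $\int_Q|b-b_Q|\,dx\le C\mu(Q)^{1+\beta/n}$, which is exactly $b\in Lip_{\beta,\mu}$.

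For $(1)\Rightarrow(2)$, the key step is the pointwise estimate
\[
|b(x)-b_Q|\le C\|b\|_{Lip_{\beta,\mu}}\,\mu(x)\,\mu(Q)^{\beta/n}\quad\text{a.e.\ on }Q,
\]
which I would obtain by chaining the weighted Lipschitz condition along a sequence of dyadic subcubes shrinking to $x$: each telescoping term is controlled by the $A_1$ comparability $\mu(Q_j)/|Q_j|\le C\mu(x)$, while the $A_{\infty}$ reverse-doubling of $\mu$ makes the series $\sum_j\mu(Q_j)^{\beta/n}$ converge geometrically. By symmetry this yields $|b(x)-b(y)|\le C\|b\|_{Lip_{\beta,\mu}}(\mu(x)+\mu(y))\mu(Q)^{\beta/n}$ for a.e.\ $x,y\in Q$; combining with the $A_1$ relation $1/|Q|\le C\mu(x)/\mu(Q)$ then produces the pointwise domination
\[
M_b(f)(x)\le C\|b\|_{Lip_{\beta,\mu}}\,\mu(x)\,\mathcal{M}_\beta^\mu(f)(x),
\]
where $\mathcal{M}_\beta^\mu f(x):=\sup_{Q\ni x}\mu(Q)^{\beta/n-1}\int_Q|f|\,d\mu$. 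Since $\|h\|_{L^q(\mu^{1-q})}=\|h/\mu\|_{L^q(\mu)}$, this reduces the desired bound to $\|\mathcal{M}_\beta^\mu f\|_{L^q(\mu)}\le C\|f\|_{L^p(\mu)}$, which is the classical weighted Hardy--Littlewood--Sobolev-type inequality for the doubling measure $d\mu$ under $1/q=1/p-\beta/n$.

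The main obstacle is the chaining estimate for $|b(x)-b_Q|$: one needs both the $A_1$ upper bound on $\mu(Q)/|Q|$ and the $A_{\infty}$ reverse-doubling of $\mu$ to force the geometric series to converge, and one must also verify that the unavoidable factor $\mu(x)$ appearing on the right-hand side (coming from the weighted Lipschitz definition when the $\mu$-weighted average is removed) is precisely what is absorbed by the target weight $\mu^{1-q}$. Once that pointwise estimate is in hand, the remainder of the sufficiency argument is routine $A_1$ bookkeeping combined with the well-known weighted $L^p(\mu)\to L^q(\mu)$ boundedness of $\mathcal{M}_\beta^\mu$.
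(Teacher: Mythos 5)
Your proposal is correct and follows essentially the same route as the paper: necessity by testing $M_b$ on $\chi_Q$ and applying H\"older's inequality with exponents $q,q'$, and sufficiency via the pointwise domination $M_b(f)(x)\le C\|b\|_{Lip_{\beta,\mu}}\,\mu(x)\,M_{\beta,\mu,r}(f)(x)$ together with the $L^p(\mu)\to L^q(\mu)$ boundedness of the weighted fractional maximal operator and the identity $\|\mu g\|_{L^q(\mu^{1-q})}=\|g\|_{L^q(\mu)}$. The only differences are cosmetic: you take $r=1$ and sketch a dyadic chaining proof of the pointwise weighted Lipschitz estimate, whereas the paper takes $1<r<p$ and quotes these ingredients from Tang and Lian--Ma--Wu (its Lemmas 2.1--2.5).
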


\begin{theorem}\label{thm.1.2}   
Let $b$ be a locally integrable function, $\mu\in {A_1}$ and $0<\beta<1$.
Suppose that $1<p<n/\beta$ and $1/q=1/p-\beta /n$.
Then the following assertions are equivalent:

{\rm(1)} $b \in Li{p_{\beta ,\mu }}$ and $b\ge 0$ a.e. in $\rn$.

{\rm(2)} $[b,M]$ is bounded from ${L^p}(\mu )$ to ${L^q}(\mu^{1-q})$.

{\rm(3)} There exists a constant $C>0$ such that
\begin{equation}  \label{equ.thm.2-0}  
\sup_{Q} \frac{1}{\mu(Q)^{\beta/n}}
 \bigg(\frac{1}{\mu(Q)}\int_Q |b(x)-M_Q(b)(x)|^q \mu(x)^{1-q} dx \bigg)^{1/q} \le{C}.
\end{equation}
\end{theorem}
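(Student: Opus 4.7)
I plan to prove the cycle $(1)\Rightarrow(2)\Rightarrow(3)\Rightarrow(1)$.

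For $(1)\Rightarrow(2)$, the key is the pointwise bound $|[b,M](f)(x)|\le M_b(f)(x)$, valid whenever $b\ge 0$: since the nonnegative scalar $b(x)$ pulls out of $M$, we have $b(x)M(f)(x)=M(b(x)f)(x)$, and the sublinearity $|M(g)-M(h)|\le M(g-h)$ applied with $g=b(x)f$ and $h=bf$ gives the claim. Boundedness of $[b,M]$ then reduces to the $M_b$-boundedness furnished by Theorem~\ref{thm.1.1}.

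For $(2)\Rightarrow(3)$, I would test $[b,M]$ on $f=\chi_{Q_0}$. Since $M(\chi_{Q_0})(x)=1$ for $x\in Q_0$,
$$[b,M](\chi_{Q_0})(x)=b(x)-M(b\chi_{Q_0})(x).$$
Because $M_{Q_0}(b)$ takes its supremum over a sub-family of the cubes used by $M(b\chi_{Q_0})$, Lebesgue differentiation gives $|b(x)|\le M_{Q_0}(b)(x)\le M(b\chi_{Q_0})(x)$ a.e.\ on $Q_0$, whence
$$|b(x)-M_{Q_0}(b)(x)|\le|b(x)-M(b\chi_{Q_0})(x)|=|[b,M](\chi_{Q_0})(x)|.$$
Hypothesis (2) with $\|\chi_{Q_0}\|_{L^p(\mu)}=\mu(Q_0)^{1/p}$ and $1/p=1/q+\beta/n$ then immediately yields (3).

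For $(3)\Rightarrow(1)$, I would argue in two steps. To obtain $b\ge 0$ a.e., I use that $M_Q(b)\ge|b|$ a.e.\ by Lebesgue differentiation, so $|b-M_Q(b)|=M_Q(b)-b\ge|b|-b=2b^{-}$. Plugging into (3) and shrinking $Q$ to a generic Lebesgue point $x_0$ (using $\mu\in A_1$ so $\mu(Q)\sim\mu(x_0)|Q|$), the factor $\mu(Q)^{\beta/n}\to 0$ on the right forces $b^{-}(x_0)=0$. Next, for $b\in Lip_{\beta,\mu}=Lip_{\beta,\mu}^1$, I exploit this nonnegativity: taking $Q'=Q$ in the sup defining $M_Q(b)$ gives $M_Q(b)(x)\ge b_Q$ on $Q$, so $(b_Q-b(x))_+\le M_Q(b)(x)-b(x)$, and the mean-zero identity $\int_Q(b-b_Q)_+\,dx=\int_Q(b_Q-b)_+\,dx$ yields
$$\int_Q|b-b_Q|\,dx=2\int_Q(b_Q-b)_+\,dx\le 2\int_Q(M_Q(b)-b)\,dx.$$
Hölder's inequality with the splitting $1=\mu^{(1-q)/q}\mu^{(q-1)/q}$ (using $(q-1)q'/q=1$) gives
$$\int_Q(M_Q(b)-b)\,dx\le\mu(Q)^{1/q'}\Big(\int_Q(M_Q(b)-b)^q\mu^{1-q}\,dx\Big)^{1/q}\le C\mu(Q)^{1+\beta/n}$$
by (3), which is precisely the $L^1$ weighted Lipschitz estimate $\mu(Q)^{-(1+\beta/n)}\int_Q|b-b_Q|\,dx\le 2C$.

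The main obstacle will be the Lipschitz half of $(3)\Rightarrow(1)$. The direct attempt to control $\|b-b_Q\|_{L^q(Q,\mu^{1-q})}$ by $\|M_Q(b)-b\|_{L^q(Q,\mu^{1-q})}$ through a weighted Poincaré-type estimate for $M_Q(b)$ is circular: the $L^q(\mu^{1-q})$-boundedness of $M$ (which does hold since $\mu^{1-q}\in A_q$ when $\mu\in A_1$) only returns $\|b-b_Q\|_{L^q(Q,\mu^{1-q})}$ on the right-hand side. The way around this is to pass through the weaker but equivalent $L^1$ oscillation formulation of the Lipschitz norm, use the pointwise bound $M_Q(b)\ge b_Q$ (which requires $b\ge 0$) together with mean-zero symmetry to replace $|b-b_Q|$ by $M_Q(b)-b$, and only at the very end import the weight via Hölder. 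The fact that this argument forces $b\ge 0$ is exactly why the positivity assumption must appear in (1).
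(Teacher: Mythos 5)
Your proposal is correct and follows essentially the same route as the paper: pointwise domination $|[b,M](f)|\le M_b(f)$ plus Theorem \ref{thm.1.1} for $(1)\Rightarrow(2)$, testing on $\chi_Q$ for $(2)\Rightarrow(3)$, and the symmetry of $\int_Q|b-b_Q|\,dx$ about the mean combined with $M_Q(b)\ge b_Q$, H\"older's inequality and Lebesgue differentiation for $(3)\Rightarrow(1)$. The only cosmetic differences are that you use the easy inequality $M_Q(b)\le M(b\chi_Q)$ on $Q$ together with a sign argument where the paper invokes the identity $M(b\chi_Q)=M_Q(b)$ on $Q$, you run the positivity and Lipschitz halves of $(3)\Rightarrow(1)$ in the opposite order and shrink cubes directly on the weighted $L^q$ average, and your closing remark overstates the role of $b\ge 0$: the estimate $M_Q(b)(x)\ge |b|_Q\ge b_Q$ holds for arbitrary $b$ (since $M_Q$ involves $|b|$), so the Lipschitz half does not actually require nonnegativity.
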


\begin{remark}
Theorems \ref{thm.1.1} and \ref{thm.1.2} give some new characterizations
for centein weighted Lipschitz functions.
 The relevant unweighted results were established in \cite{zp2017}.
\end{remark}

Theorem \ref{thm.1.2} leads to some characterizations for
nonnegative weighted Lipschitz functions.

\begin{corollary}  \label{cor.1} 
Let $0<\beta<1$, $b$ be a locally integrable function and $\mu\in {A_1}$.
Then the following assertions are equivalent:

{\rm(1)} $b \in Li{p_{\beta ,\mu }}$ and $b\ge 0$ a.e. in $\rn$.

{\rm(2)} There exists $1\le{s}<\infty$ such that
\begin{equation}   \label{equ.cor.1}
\sup_Q \frac{1}{\mu(Q)^{\beta/n}}
 \bigg(\frac{1}{\mu(Q)}\int_Q |b(x)-M_Q(b)(x)|^s \mu(x)^{1-s} dx \bigg)^{1/s} \le{C}.
\end{equation}

{\rm(3)} For all $1\le{s}<\infty$, (\ref{equ.cor.1}) holds.
\end{corollary}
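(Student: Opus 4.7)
\textbf{Proof proposal for Corollary~\ref{cor.1}.}

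The plan is to cycle $(3)\Rightarrow(2)\Rightarrow(1)\Rightarrow(3)$. The step $(3)\Rightarrow(2)$ is immediate, and the other two will be handled by combining Theorem~\ref{thm.1.2} with a monotonicity of the left side of~(\ref{equ.cor.1}) in the exponent~$s$.

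First I establish the monotonicity lemma: for any weight $\mu$ and any $1\le s_1\le s_2<\infty$,
\[
\frac{1}{\mu(Q)^{\beta/n}}\bigg(\frac{1}{\mu(Q)}\int_Q |b-M_Q(b)|^{s_1}\mu^{1-s_1}\,dx\bigg)^{\!1/s_1} \le \frac{1}{\mu(Q)^{\beta/n}}\bigg(\frac{1}{\mu(Q)}\int_Q |b-M_Q(b)|^{s_2}\mu^{1-s_2}\,dx\bigg)^{\!1/s_2}.
\]
The factorization $|b-M_Q(b)|^{s_1}\mu^{1-s_1}=\bigl(|b-M_Q(b)|^{s_2}\mu^{1-s_2}\bigr)^{s_1/s_2}\mu^{(s_2-s_1)/s_2}$ together with H\"older's inequality for the dual exponents $s_2/s_1$ and $s_2/(s_2-s_1)$ does the job.

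For $(2)\Rightarrow(1)$, apply the monotonicity to push the hypothesis at the given $s_0\ge 1$ down to $s=1$, giving $\mu(Q)^{-1-\beta/n}\int_Q|b-M_Q(b)|\,dx\le C$. Since $M_Q(b)\ge |b|$ a.e., the integrand equals $M_Q(b)-b\ge 2b^-$, so $|Q|^{-1}\int_Q b^-\,dx\le C\,\mu(Q)^{\beta/n}\,\mu(Q)/|Q|$. The $A_1$ condition bounds $\mu(Q)/|Q|$ pointwise a.e., and $\mu(Q)^{\beta/n}\to 0$ as $Q$ shrinks, so Lebesgue differentiation yields $b\ge 0$ a.e. With $b\ge 0$ one has $M_Q(b)\ge b_Q$ on $Q$ together with the identity $\int_Q(M_Q(b)-b_Q)\,dx=\int_Q(M_Q(b)-b)\,dx$; combined with the triangle bound $|b-b_Q|\le (M_Q(b)-b)+(M_Q(b)-b_Q)$ this produces $\int_Q|b-b_Q|\,dx\le 2\int_Q|b-M_Q(b)|\,dx$, and dividing by $\mu(Q)^{1+\beta/n}$ gives $\|b\|_{Lip_{\beta,\mu}}\le 2C$.

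For $(1)\Rightarrow(3)$, fix $s\ge 1$. If $s>n/(n-\beta)$, take $p=sn/(n+\beta s)$, which lies in $(1,n/\beta)$ and satisfies $1/p-\beta/n=1/s$; Theorem~\ref{thm.1.2} applied at $(p,q=s)$ then yields~(\ref{equ.cor.1}). If $1\le s\le n/(n-\beta)$, first obtain~(\ref{equ.cor.1}) at some exponent $s_0>n/(n-\beta)$ as above, then descend to $s$ by the monotonicity lemma. The main obstacle is the asymmetry caused by the index condition $1/q=1/p-\beta/n$ in Theorem~\ref{thm.1.2}, which only reaches exponents $q>n/(n-\beta)$; since the monotonicity runs only downward in $s$, the delicate step is $(2)\Rightarrow(1)$, where both $b\ge 0$ and the Lipschitz bound must be extracted from the single integral inequality at $s=1$.
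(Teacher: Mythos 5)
Your proposal is correct and follows essentially the same route as the paper: the implication $(1)\Rightarrow(3)$ uses Theorem \ref{thm.1.2} for $s>n/(n-\beta)$ and the same H\"older descent in the exponent (your ``monotonicity lemma''), while $(2)\Rightarrow(1)$ is exactly the content of the paper's Remark \ref{rem.thm.1.2}, namely that the necessity argument of Theorem \ref{thm.1.2} works for any exponent $s\ge1$. Your self-contained verification at $s=1$ (using $M_Q(b)\ge|b|$ a.e., $2b^-\le M_Q(b)-b$, and the triangle bound $|b-b_Q|\le(M_Q(b)-b)+(M_Q(b)-b_Q)$ in place of the paper's splitting $\int_E|b-b_Q|=\int_F|b-b_Q|$) is a harmless variation of the same argument.
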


For the nonlinear commutator of the sharp function,
 we have the following result.

\begin{theorem}   \label{thm.1.3}   
Let $b$ be a locally integrable function, $\mu\in {A_1}$ and $0<\beta<1$.
Suppose that $1<p<n/\beta$ and $1/q=1/p-\beta /n$.
Then the following assertions are equivalent:

{\rm(1)} $b \in Li{p_{\beta ,\mu }}$ and $b\ge 0$ a.e. in $\rn$.

{\rm(2)} $[b,M^{\sharp}]$ is bounded from ${L^p}(\mu )$ to ${L^q}(\mu^{1-q})$.

{\rm(3)} There exists a constant $C>0$ such that
$$\sup_Q \frac{1}{\mu(Q)^{\beta/n}}
 \bigg(\frac{1}{\mu(Q)}\int_Q |b(x)-2M^{\sharp} (b\chi_Q)(x)|^q \mu(x)^{1-q} dx \bigg)^{1/q} \le{C}.
$$
\end{theorem}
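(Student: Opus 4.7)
My plan is the cyclic chain $(1)\Rightarrow(2)\Rightarrow(3)\Rightarrow(1)$, following the blueprint of Theorem~\ref{thm.1.2}. For $(1)\Rightarrow(2)$ the main task is the pointwise domination $|[b,M^{\sharp}](f)(x)|\le 2M_b(f)(x)$, valid whenever $b\ge 0$; after that, Theorem~\ref{thm.1.1} finishes the argument. To establish the domination I will use two symmetric chains of triangle inequalities inside the supremum defining $M^{\sharp}$: inserting $b(x)f_Q$ and noting $|b(x)f_Q-(bf)_Q|\le\frac{1}{|Q|}\int_Q|b(x)-b(y)||f(y)|dy$ yields $M^{\sharp}(bf)(x)\le 2M_b(f)(x)+b(x)M^{\sharp}(f)(x)$, while the reverse chain, which exploits $b(x)\ge 0$ to bring $b(x)$ inside the absolute value via $b(x)|f(y)-f_Q|=|b(x)f(y)-b(x)f_Q|$, bounds $b(x)M^{\sharp}(f)(x)$ above by $2M_b(f)(x)+M^{\sharp}(bf)(x)$. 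Subtracting gives the claim.

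For $(2)\Rightarrow(3)$ I will test the operator on $f=\chi_Q$. The classical identity $M^{\sharp}(\chi_Q)(x)=1/2$ for a.e.\ $x\in Q$ (witnessed by choosing a cube of measure $2|Q|$ containing both $x$ and $Q$) rearranges the commutator into $2[b,M^{\sharp}](\chi_Q)(x)=b(x)-2M^{\sharp}(b\chi_Q)(x)$ for $x\in Q$. Inserting this into the hypothesised $L^p(\mu)\to L^q(\mu^{1-q})$ estimate and using $\|\chi_Q\|_{L^p(\mu)}=\mu(Q)^{1/p}$ together with $1/p=1/q+\beta/n$ delivers (3) directly.

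The hard step is $(3)\Rightarrow(1)$, where one must extract simultaneously $b\ge 0$ a.e.\ and the weighted Lipschitz norm estimate from a single averaged inequality. To produce $b\ge 0$ I plan to argue by contradiction: at a density point $x_0$ of $\{b<0\}$, on a sufficiently small cube $Q$ around $x_0$ the function $b$ is essentially equal to the negative value $b(x_0)$, and a direct computation on cubes of measure $2|Q|$ shows $2M^{\sharp}(b\chi_Q)(x)\approx|b(x_0)|$ on $Q$; thus $|b(x)-2M^{\sharp}(b\chi_Q)(x)|$ stays bounded away from $0$, forcing the left-hand side of (3) to blow up as $\mu(Q)^{\beta/n}\to 0$. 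Once $b\ge 0$, the companion estimate $2M^{\sharp}(b\chi_Q)(x)\ge b_Q$ for $x\in Q$ (again via the doubled-cube test in the supremum) permits the split
\[
|b(x)-b_Q|\le |b(x)-2M^{\sharp}(b\chi_Q)(x)|+\bigl(2M^{\sharp}(b\chi_Q)(x)-b_Q\bigr);
\]
the first summand is controlled by (3) after H\"older, while the second, being pointwise nonnegative, can be folded back into the first via the observation $\int_Q(b-2M^{\sharp}(b\chi_Q))\,dx\le 0$ combined with the $A_1$ property of $\mu$. The delicate balancing of weighted norms so as to reproduce the factor $\mu(Q)^{\beta/n}$ is where the $A_1$ hypothesis enters essentially, and constitutes the main technical obstacle.
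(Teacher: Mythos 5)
Your outline is correct and, for $(1)\Rightarrow(2)$ and $(2)\Rightarrow(3)$, it is essentially the paper's proof: the paper also reduces $(1)\Rightarrow(2)$ to the pointwise bound $|[b,M^{\sharp}](f)|\le 2M_b(f)$ (quoted as Lemma \ref{lem.thm1.3-zw-mia} rather than reproved; note that your ``subtracting'' step needs $M^{\sharp}(f)(x)<\infty$ and $|b(x)|<\infty$, which the paper secures via Remark \ref{rem.lem.2.6-2.7} and the finiteness a.e.\ of $M^{\sharp}(f)$ for $f\in L^p(\mu)$) and then invokes Theorem \ref{thm.1.1}; the test-function computation with $M^{\sharp}(\chi_Q)=\tfrac12$ on $Q$ is identical. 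The divergence is in $(3)\Rightarrow(1)$. For the Lipschitz part the paper splits $Q$ into $\{b\le b_Q\}$ and its complement and uses $b(x)\le b_Q\le|b_Q|\le 2M^{\sharp}(b\chi_Q)(x)$ there, whereas you use the triangle inequality together with $\int_Q(b-b_Q)\,dx=0$; both routes give $\int_Q|b-b_Q|\le 2\int_Q|b-2M^{\sharp}(b\chi_Q)|$ followed by H\"older and (3), so yours is an equivalent variant (and, contrary to your phrasing, it needs neither $b\ge0$ nor any $A_1$ input, only the unconditional estimate $2M^{\sharp}(b\chi_Q)\ge|b_Q|$ on $Q$). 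For $b\ge0$ the paper is more direct: it integrates $2M^{\sharp}(b\chi_Q)-b\ge|b_Q|-b^{+}+b^{-}$ over $Q$, bounds the left side via H\"older and (3) by $C|Q|^{\beta/n}\big(|Q|^{-1}\int_Q\mu\big)^{1+\beta/n}$, and lets $|Q|\to0$ at Lebesgue points to get $2b^{-}(x_0)=0$, with no contradiction argument. Your density-point contradiction can be made to work, but two points in it are glossed: only the lower bound $2M^{\sharp}(b\chi_Q)(x)\ge|b_Q|$ is true and needed, since the upper half of your claim $2M^{\sharp}(b\chi_Q)\approx|b(x_0)|$ fails when $b$ oscillates inside $Q$ (harmlessly, as a larger value of $M^{\sharp}$ only increases $|b-2M^{\sharp}(b\chi_Q)|$ where $b<0$); and the asserted blow-up of the left side of (3) requires the H\"older lower bound $\big(\mu(Q)^{-1}\int_E\mu^{1-q}\big)^{1/q}\ge|E|/\mu(Q)$ on the large subset $E\subset Q$ where $b$ stays negative, which you leave implicit. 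Finally, your closing assessment is off: the $A_1$ hypothesis plays no essential role in $(3)\Rightarrow(1)$ (only local integrability and Lebesgue differentiation are used, compare Remark \ref{rem.thm.1.2} for the analogous statement in Theorem \ref{thm.1.2}); the weight class is really consumed in $(1)\Rightarrow(2)$ through Theorem \ref{thm.1.1}.
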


\begin{remark}
The results similar to Theorem \ref{thm.1.3} for $\mu\equiv 1$ were first obtained
in \cite{zws} Corollary 1.2. In the context of variable spaces were
 considered in \cite{zp2019}, see also \cite{zsw}. Similar results were given in Orlics spaces in \cite{zws}.
\end{remark}

Theorem \ref{thm.1.3} also gives some characterizations for
nonnegative weighted Lipschitz functions.

\begin{corollary}  \label{cor.2} 
Let $0<\beta<1$, $b$ be a locally integrable function and $\mu\in {A_1}$.
Then the following assertions are equivalent:

{\rm(1)} $b \in Li{p_{\beta ,\mu }}$ and $b\ge 0$ a.e. in $\rn$.

{\rm(2)} There exists $1\le{s}<\infty$ such that
\begin{equation}   \label{equ.cor.2}
 \sup_Q \frac{1}{\mu(Q)^{\beta/n}}
  \bigg(\frac{1}{\mu(Q)}\int_Q |b(x)-2M^{\sharp} (b\chi_Q)(x)|^s \mu(x)^{1-s} dx \bigg)^{1/s} \le{C}.
\end{equation}

{\rm(3)} For all $1\le{s}<\infty$, (\ref{equ.cor.2}) holds.
\end{corollary}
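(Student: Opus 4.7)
The strategy is to reduce everything to Theorem~\ref{thm.1.3} and bridge the gap between the single exponent $q$ appearing there (which, as $p\in(1,n/\beta)$ varies, ranges only over $(n/(n-\beta),\infty)$) and an arbitrary $s\in[1,\infty)$. The implication (3)$\Rightarrow$(2) is immediate, so only (1)$\Rightarrow$(3) and (2)$\Rightarrow$(1) require work.

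For (1)$\Rightarrow$(3), fix $b\in Lip_{\beta,\mu}$ with $b\ge 0$ and $s\in[1,\infty)$. Choose $q_0>\max\{s,\,n/(n-\beta)\}$ together with the corresponding $p_0\in(1,n/\beta)$ satisfying $1/q_0=1/p_0-\beta/n$; Theorem~\ref{thm.1.3} applied to $(p_0,q_0)$ delivers (\ref{equ.cor.2}) with exponent $q_0$ in place of $s$. To descend from $q_0$ to $s$, set $g(x)=b(x)-2M^{\sharp}(b\chi_Q)(x)$, rewrite $|g|^s\mu^{1-s}=|g/\mu|^s\mu$, and view $\mu(Q)^{-1}\mu\,dx$ as a probability measure on $Q$. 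Monotonicity of $L^r$-norms on a probability space then yields
\[
\bigg(\frac{1}{\mu(Q)}\int_Q|g|^s\mu^{1-s}dx\bigg)^{1/s}
\le\bigg(\frac{1}{\mu(Q)}\int_Q|g|^{q_0}\mu^{1-q_0}dx\bigg)^{1/q_0},
\]
which transfers the bound from $q_0$ to $s$.

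For (2)$\Rightarrow$(1), if the supplied $s$ lies in $(n/(n-\beta),\infty)$, pick $p\in(1,n/\beta)$ with $1/s=1/p-\beta/n$; the hypothesis then matches condition~(3) of Theorem~\ref{thm.1.3} with $q=s$, so $b\in Lip_{\beta,\mu}$ and $b\ge 0$. For $s\in[1,n/(n-\beta)]$ the probability-space inequality above runs the wrong way, so one either re-examines the proof of Theorem~\ref{thm.1.3}, (3)$\Rightarrow$(1), to verify that the restriction $q>n/(n-\beta)$ is cosmetic (only the finiteness of the supremum in condition~(3) for \emph{some} exponent being actually used in the argument), or invokes the reverse H\"older property of $\mu\in A_1\subset A_\infty$ to upgrade the $\mu^{1-s}$-integrability of $b-2M^{\sharp}(b\chi_Q)$ to an exponent for which Theorem~\ref{thm.1.3} applies directly.

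\textbf{Main obstacle.} The genuinely delicate step is (2)$\Rightarrow$(1) for small $s$; the rest is a clean combination of Theorem~\ref{thm.1.3} with a probability-space H\"older/Jensen estimate.
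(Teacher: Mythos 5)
Your proposal is correct and takes essentially the paper's route: the paper proves Corollary~\ref{cor.2} by repeating the argument of Corollary~\ref{cor.1}, namely (1)$\Rightarrow$(3) by applying Theorem~\ref{thm.1.3} at a large exponent and descending to $s$ via H\"older/Jensen with respect to the probability measure $\mu(Q)^{-1}\mu\,dx$, and (2)$\Rightarrow$(1) for small $s$ by the analogue of Remark~\ref{rem.thm.1.2} --- the proof of (3)$\Rightarrow$(1) in Theorem~\ref{thm.1.3} uses only H\"older's inequality at the given exponent and never the restriction $q>n/(n-\beta)$, so it is valid for every $s\ge 1$, which is exactly your primary fix. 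The only caveat is your fallback via reverse H\"older, which would not work as stated (it improves the integrability of $\mu$, not of $b-2M^{\sharp}(b\chi_Q)$ against $\mu^{1-s}$), but it is not needed since the first route goes through.
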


The rest of this paper is organized as follows.
 In section 2, we give some preliminary facts.
  Section 3 is devoted to proving Theorems 1.1, 1.2  and Corollary 1.1.
   The proof of Theorem 1.3 and Corollary 1.2 will be given in the last section.

\section{Preliminaries and Lemmas}

We will give some preliminaries and lemmas.
 For $x_0\in\rn$ and $r>0$, let $B(x_0,r)$ denote
   the ball centered at $x_0$ with radius $r$.
The following characterizations for the weighted
Lipschitz spaces hold. See Garc\'ia-Cuerva \cite{g} and Tang \cite{t}. 

\begin{lemma}[\cite{t}]     \label{lem.t}  
Let $0<\beta<1$, $w\in{A_1}$ and $1\le{p}\le\infty$.
Then $Lip_{\beta,\mu}^p =Lip_{\beta ,\mu}$ with equivalent norms.
Furthermore, $f\in {Lip_{\beta,w}^p}$
if and only if there exists a constant $C$ such that
$$|f(x)-f(y)|\le {C}\|f\|_{Lip_{\beta,w}}\big[w(B(x,|x-y|))\big]^{\beta/n}(w(x)+w(y))
$$
for a.e. $x, y \in \rn$, especially for $x,y$ being Lebesgue points of $f$.
\end{lemma}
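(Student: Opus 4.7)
The plan is to establish the pointwise inequality for $p=1$ by a telescoping chain of cubes, then leverage it to derive the norm equivalence across $p\in[1,\infty]$. The easy direction $\|f\|_{Lip_{\beta,\mu}}\le \|f\|_{Lip_{\beta,\mu}^p}$ is immediate from H\"older's inequality applied to the factorization $|f-f_Q|=|f-f_Q|\mu^{(1-p)/p}\cdot\mu^{(p-1)/p}$ with exponents $p$ and $p'$, which yields
\[
\frac{1}{\mu(Q)}\int_Q|f-f_Q|\,dx\le\bigl(\tfrac{1}{\mu(Q)}\int_Q|f-f_Q|^p\mu^{1-p}\,dx\bigr)^{1/p},
\]
so $Lip_{\beta,\mu}^p\subseteq Lip_{\beta,\mu}$.

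Assuming $f\in Lip_{\beta,\mu}$, I would prove the pointwise inequality as follows. Fix Lebesgue points $x,y$ of $f$ and a cube $Q_0$ of sidelength comparable to $|x-y|$ containing both. Build sequences $\{Q_k^x\}$ and $\{Q_k^y\}$ obtained by halving sidelengths toward $x$ and $y$, respectively, with $Q_0^x=Q_0^y=Q_0$. At Lebesgue points one has $f(x)=\lim_k f_{Q_k^x}$, so telescope and bound each difference by $\frac{2^n\mu(Q_k^x)}{|Q_k^x|}\cdot\frac{1}{\mu(Q_k^x)}\int_{Q_k^x}|f-f_{Q_k^x}|\,dz$. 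The $Lip_{\beta,\mu}$ norm controls the weighted average by $\|f\|_{Lip_{\beta,\mu}}\mu(Q_k^x)^{\beta/n}$, while the $A_1$ condition gives $\mu(Q_k^x)/|Q_k^x|\le[\mu]_{A_1}\mu(x)$ a.e.\ on $Q_k^x$. The reverse-H\"older property of $A_\infty$ weights (since $A_1\subset A_\infty$) yields $\mu(Q_k^x)\le C\,2^{-kn\delta}\mu(Q_0)$ for some $\delta>0$, so $\sum_k\mu(Q_k^x)^{\beta/n}\le C\mu(Q_0)^{\beta/n}$ geometrically. The same estimate at $y$, doubling of $\mu$ to replace $\mu(Q_0)$ by $\mu(B(x,|x-y|))$, and the triangle inequality deliver the pointwise bound.

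For the remaining direction, assume the pointwise inequality. For $Q\ni x$ and $y\in Q$ one has $|x-y|\lesssim\ell(Q)$, so $B(x,|x-y|)\subseteq cQ$ and doubling yields $\mu(B(x,|x-y|))\le C\mu(Q)$; combining with the $A_1$ estimate $\frac{1}{|Q|}\int_Q\mu\le[\mu]_{A_1}\mu(x)$ gives $|f(x)-f_Q|\le C\|f\|_{Lip_{\beta,\mu}}\mu(Q)^{\beta/n}\mu(x)$. Raising to the $p$-th power, multiplying by $\mu^{1-p}$, integrating over $Q$, and dividing by $\mu(Q)^{1+p\beta/n}$ gives $\|f\|_{Lip_{\beta,\mu}^p}\le C\|f\|_{Lip_{\beta,\mu}}$, which closes both the norm equivalence and the characterization.

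The main obstacle is the chain-of-cubes argument: one must balance the $A_1$ estimate (contributing the pointwise factor $\mu(x)$) against the $A_\infty$ reverse-H\"older decay (ensuring summability of $\sum_k\mu(Q_k^x)^{\beta/n}$), and choose $Q_0$ so that $\mu(Q_0)\approx\mu(B(x,|x-y|))$ via doubling. The remaining steps reduce to standard H\"older and doubling manipulations.
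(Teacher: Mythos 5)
The paper offers no proof of this lemma: it is imported verbatim from Tang \cite{t} (see also Garc\'ia-Cuerva \cite{g}), so there is no in-paper argument to measure you against; what can be judged is whether your self-contained proof is sound, and in outline it is. Your three-step cycle is logically complete: H\"older with exponents $p$, $p'$ gives $\|f\|_{Lip_{\beta,\mu}}\le\|f\|_{Lip^p_{\beta,\mu}}$; the telescoping chain of halved cubes, with $\frac{1}{\mu(Q_k)}\int_{Q_k}|f-f_{Q_k}|\le\|f\|_{Lip_{\beta,\mu}}\mu(Q_k)^{\beta/n}$, the pointwise $A_1$ bound $M(\mu)(x)\le[\mu]_{A_1}\mu(x)$ a.e.\ (which handles all cubes of the chain at once), and the $A_\infty$ comparability $\mu(Q_k)\le C2^{-kn\delta}\mu(Q_0)$ ensuring a convergent geometric series, yields the pointwise inequality at Lebesgue points, with $\mu(Q_0)\lesssim\mu(B(x,|x-y|))$ by doubling; and the converse direction (pointwise bound plus $\frac{1}{|Q|}\int_Q\mu\le[\mu]_{A_1}\mu(x)$ gives $|f(x)-f_Q|\le C\|f\|_{Lip_{\beta,\mu}}\mu(Q)^{\beta/n}\mu(x)$, then integrate against $\mu^{1-p}$) is exactly the computation the paper itself performs in its Lemma 2.2, so that part even matches the paper's internal machinery. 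Two small loose ends: your integration step only covers $1\le p<\infty$, and the $p=\infty$ case of the norm equivalence should be noted separately (it follows directly from the same pointwise bound $|f(x)-f_Q|\le C\|f\|_{Lip_{\beta,\mu}}\mu(Q)^{\beta/n}\mu(x)$); and you should state explicitly that the exceptional null sets (Lebesgue points of $f$ and the set where $M(\mu)\le[\mu]_{A_1}\mu$ fails) are discarded once and for all, since the conclusion is only claimed for a.e.\ $x,y$. Neither affects the validity of the approach.
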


\begin{lemma}  \label{lem.wlip}  
Let $w\in{A_1}$, $0<\beta<1$ and $b\in {Lip_{\beta,w}}$.
 Then there is a constant $C>0$ such that for any cubes $Q\subset\rn$
  and almost all $x\in{Q}$, we have
\begin{equation}     \label{equ.lem.wlip}
|b(x)-b_Q|\le {C}\|b\|_{Lip_{\beta,w}} w(Q)^{\beta/n}w(x).  
 \end{equation}
\end{lemma}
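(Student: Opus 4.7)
The plan is to derive the pointwise estimate from the pointwise characterization in Lemma 2.1 combined with the defining property of $A_1$ weights. The starting observation is the elementary identity
$$b(x)-b_Q = \frac{1}{|Q|}\int_Q \bigl(b(x)-b(y)\bigr)\,dy,$$
valid for every $x\in Q$. Taking absolute values inside the integral reduces the task to estimating the average of $|b(x)-b(y)|$ over $y\in Q$.

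For almost every $x,y\in Q$, Lemma 2.1 gives
$$|b(x)-b(y)|\le C\|b\|_{Lip_{\beta,w}}\bigl[w(B(x,|x-y|))\bigr]^{\beta/n}\bigl(w(x)+w(y)\bigr).$$
Next I would absorb the ball factor into $w(Q)^{\beta/n}$: since $x,y\in Q$ forces $|x-y|\le \sqrt{n}\,\ell(Q)$, the ball $B(x,|x-y|)$ is contained in a fixed dilate $c_n Q$ of $Q$, and the doubling property of $w\in A_1\subset A_\infty$ yields $w(B(x,|x-y|))\le w(c_nQ)\le C\,w(Q)$. Plugging this back and averaging in $y$ gives
$$|b(x)-b_Q|\le C\|b\|_{Lip_{\beta,w}}\,w(Q)^{\beta/n}\left(w(x)+\frac{w(Q)}{|Q|}\right).$$

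The final step invokes the $A_1$ hypothesis directly: by definition of $A_1$,
$$\frac{w(Q)}{|Q|}\le [w]_{A_1}\,\essinf_{y\in Q} w(y)\le [w]_{A_1}\,w(x)\qquad \text{for a.e. }x\in Q,$$
so both terms in the parenthesis are majorized by a constant multiple of $w(x)$, yielding \eqref{equ.lem.wlip}.

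I do not expect real obstacles here; the only points that require care are the dimensional constant hidden in the ball/cube comparison $B(x,|x-y|)\subset c_n Q$ (so one must invoke doubling of $w$, which is automatic from $A_\infty$), and the fact that the inequality in Lemma 2.1 is only a.e., so the identity $b(x)-b_Q=|Q|^{-1}\int_Q(b(x)-b(y))\,dy$ should be used together with Fubini to restrict to Lebesgue points where Lemma 2.1 applies; this is the standard technicality and produces the stated ``almost all $x\in Q$'' conclusion.
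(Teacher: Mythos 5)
Your proposal is correct and follows essentially the same route as the paper: average the pointwise estimate of Lemma \ref{lem.t} over $y\in Q$, absorb $w(B(x,|x-y|))$ into $Cw(Q)$ by doubling, and then control the average of $w$ over $Q$ by $w(x)$ via the $A_1$ condition. The only cosmetic difference is that you invoke the $A_1$ definition through $\essinf_Q w$, while the paper bounds $\frac{1}{|Q|}\int_Q w \le M(w)(x)\le [w]_{A_1}w(x)$ at almost every $x$; these are equivalent uses of the same hypothesis.
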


\begin{proof}
Given any cube $Q$, let $B$ be the smallest ball that contains it.
Then ${B}\subset {\sqrt{n}Q}$.
 For any $x, y\in{Q}$, obviously $B(x,|x-y|)\subset 3B$.
   By the doubling property of $A_1$ weights, one has
 $$w(B(x,|x-y|))\le w(3B) \le w(3\sqrt{n}Q)\le {(3\sqrt{n})^n [w]_{A_1}}w(Q),
  $$
where $[w]_{A_1}$ is the $A_1$ constant of $w$ (see \cite{gra} page 504).

Now let $x\in {Q}$ be a Lebesgue point of $b$.
 Applying Lemma \ref{lem.t}  we have
\begin{align*}
|b(x)-b_Q| & \le \frac1{|Q|}\int_Q |b(x)-b(y)|dy\\
  & \le  {C}\|b\|_{Lip_{\beta,w}} \frac1{|Q|}\int_Q \big[w(B(x,|x-y|))\big]^{\beta/n}  (w(x)+w(y))dy\\
   & \le  {C}\|b\|_{Lip_{\beta,w}}w(Q)^{\beta/n}\bigg(w(x) +\frac1{|Q|}\int_Q w(y)dy\bigg)\\
     & \le  {C}\|b\|_{Lip_{\beta,w}}w(Q)^{\beta/n}\big(w(x)+M(w)(x)\big),
\end{align*}
where the constant $C$ above is independent of $Q$ and $b$.

Observing that almost every point in $Q$ is the Lebesgue point of $b$
 and satisfies $M(w)(z)\le [w]_{A_1}w(z)$, then we conclude that
  (\ref{equ.lem.wlip}) holds for a.e. $x\in{Q}$.
 This completes the proof.
\end{proof}

\begin{remark}     \label{rem.lem.wlip}
The proof of Lemma \ref{lem.wlip} shows that if $x$ is
 a Lebesgue point of $b$ and satisfies
 $M(w)(x)\le {[w]_{A_1}}w(x)$, then there is a constant $C>0$
  such that, for any cube $Q\ni {x}$,  
 $$|b(x)-b_Q|\le {C}\|b\|_{Lip_{\beta,w}} w(Q)^{\beta/n}w(x).
 $$
\end{remark}

We also need the following weighted version of
the fractional maximal function.

\begin{definition}\label{def.wfrac}
Let $0<\alpha<n$, $0<r<\infty$ and $w\in {A_{\infty}}$,
the weighted fractional maximal function is defined by
$$M_{\alpha ,\mu ,r}(f)(x)=
  \sup_{Q\ni {x}}\bigg(\frac{1}{\mu(Q)^{1-r\alpha/n}}\int_Q|f(y)|^r \mu(y)dy\bigg)^{1/r}.
$$
\end{definition}

\begin{lemma}[\cite{lmw}]  \label{lem.wfrac}   
Suppose that $0<\alpha<n$, $0<r<p<n/\alpha$ and $1/q=1/p-\alpha/n$.
If $w\in {A_{\infty}}$, then
$$\|M_{\alpha,w,r}(f)\|_{L^q(w)}\le {C}\|f\|_{L^p(w)}.
$$
\end{lemma}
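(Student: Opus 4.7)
The plan is a two-step reduction: first I would reduce the statement for arbitrary $r>0$ to the familiar $r=1$ version of the weighted fractional maximal operator, and then handle the $r=1$ case via a weighted Hedberg pointwise inequality combined with the strong $(\tilde p,\tilde p)$ bound of the weighted Hardy--Littlewood maximal operator.

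For the reduction, I would observe the pointwise identity
$$M_{\alpha,w,r}(f)(x)^r = \sup_{Q\ni x}\frac{1}{w(Q)^{1-r\alpha/n}}\int_Q |f(y)|^r w(y)\,dy = M_{r\alpha,w,1}(|f|^r)(x).$$
Setting $g=|f|^r\ge 0$, $\tilde\alpha=r\alpha$, $\tilde p=p/r$, $\tilde q=q/r$, the inequality to prove becomes $\|M_{\tilde\alpha,w,1}(g)\|_{L^{\tilde q}(w)} \le C\|g\|_{L^{\tilde p}(w)}$. The hypotheses $r<p<n/\alpha$ force $\tilde p>1$ and $\tilde\alpha<n$, and the balance condition $1/q=1/p-\alpha/n$ transfers directly to $1/\tilde q=1/\tilde p-\tilde\alpha/n$.

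For the reduced $r=1$ case, I would exploit two elementary upper bounds on the quantity inside the supremum. Trivially,
$$\frac{1}{w(Q)^{1-\tilde\alpha/n}}\int_Q g\, w \le w(Q)^{\tilde\alpha/n}\,\widetilde M_w(g)(x),$$
where $\widetilde M_w(g)(x)=\sup_{Q\ni x}w(Q)^{-1}\int_Q g\, w$ is the weighted Hardy--Littlewood maximal operator, while H\"older's inequality with exponent $\tilde p$ gives
$$\frac{1}{w(Q)^{1-\tilde\alpha/n}}\int_Q g\, w \le w(Q)^{-1/\tilde q}\|g\|_{L^{\tilde p}(w)}.$$
Taking the minimum of these two upper bounds and optimizing in $w(Q)>0$ yields the weighted Hedberg inequality
$$M_{\tilde\alpha,w,1}(g)(x)\le C\,\widetilde M_w(g)(x)^{\tilde p/\tilde q}\,\|g\|_{L^{\tilde p}(w)}^{1-\tilde p/\tilde q}.$$
Raising to the $\tilde q$-th power, integrating against $w$, and invoking the $L^{\tilde p}(w)$-boundedness of $\widetilde M_w$ (valid since $w\,dx$ is a locally finite Borel measure, with $A_\infty$ giving extra doubling for free) delivers, via the arithmetic identity $\tilde q-\tilde p = \tilde q(1-\tilde p/\tilde q)$,
$$\|M_{\tilde\alpha,w,1}(g)\|_{L^{\tilde q}(w)}^{\tilde q}\le C\|g\|_{L^{\tilde p}(w)}^{\tilde q-\tilde p}\|\widetilde M_w(g)\|_{L^{\tilde p}(w)}^{\tilde p}\le C\|g\|_{L^{\tilde p}(w)}^{\tilde q},$$
and undoing the substitution $g=|f|^r$ recovers the desired estimate.

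The pivotal step is the weighted Hedberg inequality in the second stage; the derivation itself is short once the two elementary bounds are combined, but it is the observation that drives everything and neatly bypasses any need for the doubling property of $w$. The $A_\infty$ hypothesis enters only implicitly, ensuring that $w\,dx$ is a reasonable Radon measure on which the weighted maximal operator enjoys the standard strong-type $(\tilde p,\tilde p)$ bound.
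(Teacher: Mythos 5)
Your argument is correct, and it cannot be compared with an in-paper proof because the paper gives none: Lemma \ref{lem.wfrac} is simply imported from the reference \cite{lmw} without proof. Your two-step derivation stands on its own. The reduction $M_{\alpha,w,r}(f)^r=M_{r\alpha,w,1}(|f|^r)$ is exact, the new parameters satisfy $\tilde p>1$, $0<\tilde\alpha<n$, $\tilde p<n/\tilde\alpha$ and $1/\tilde q=1/\tilde p-\tilde\alpha/n$, and the weighted Hedberg step is computed correctly: minimizing over $t=w(Q)$ with exponent $\tilde\alpha/n+1/\tilde q=1/\tilde p$ yields exactly $M_{\tilde\alpha,w,1}(g)\le \widetilde M_w(g)^{\tilde p/\tilde q}\|g\|_{L^{\tilde p}(w)}^{1-\tilde p/\tilde q}$, after which the strong $(\tilde p,\tilde p)$ bound for $\widetilde M_w$ closes the estimate. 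One caveat on your closing remark: the claim that the argument ``bypasses any need for the doubling property'' is only half true. The Hedberg trick itself needs no doubling, but your $\widetilde M_w$ is the \emph{uncentered} maximal operator over all cubes containing $x$, and for a general locally finite measure that operator need not satisfy the standard weak-type or strong-type bounds in dimension $n\ge 2$; it is precisely the doubling of $w\,dx$ (guaranteed by $w\in A_\infty$) that makes the strong $(\tilde p,\tilde p)$ inequality available, so $A_\infty$ enters substantively rather than ``only implicitly''. With that phrasing corrected, your proof is a complete and self-contained substitute for the external citation, which is arguably a gain in readability over the paper's approach of quoting \cite{lmw}.
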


\begin{lemma}[\cite{lmw}]  \label{lem.lmw}   
Let $0<\beta<1$, $w \in {A_1}$ and $b\in {Lip_{\beta,w}}$.
If $f$ is locally integrable and $x\in\rn$,
 then there is a constant $C$ such that
   for any $1<r<\infty$ and any cube $Q\ni{x}$,
$$ \frac1{|Q|}\int_Q |f(y)|dy\le {C}
      w(Q)^{-\beta/n}M_{\beta,w,r}(f)(x)
$$
 and
$$ \frac1{|Q|}\int_Q |b(y)-b_Q||f(y)|dy
  \le {C}\|b\|_{Lip_{\beta,w}}M(w)(x)M_{\beta,w,r}(f)(x).
$$
\end{lemma}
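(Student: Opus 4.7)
The plan is to prove both inequalities by applying Hölder's inequality with exponents $r$ and $r'$ and then exploiting two auxiliary facts: the $A_1$ property of $w$ (which gives a reverse pointwise bound on $w^{-1}$ inside $Q$), and the equivalence $Lip_{\beta,w}=Lip_{\beta,w}^{r'}$ provided by Lemma \ref{lem.t}. Before splitting, I would record the useful algebraic identity $1-r'=-r'/r$, which is what makes the weighted Lipschitz norm of exponent $r'$ fit perfectly with the weight that arises from Hölder's inequality in the forms below.

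For the first inequality, I would write
$$\frac{1}{|Q|}\int_Q|f(y)|\,dy=\frac{1}{|Q|}\int_Q|f(y)|w(y)^{1/r}w(y)^{-1/r}\,dy$$
and apply Hölder to get the product of $\big(\int_Q|f|^r w\big)^{1/r}$ and $\big(\int_Q w^{-r'/r}\big)^{1/r'}$. The first factor is at most $w(Q)^{1/r-\beta/n}M_{\beta,w,r}(f)(x)$ by the very definition of $M_{\beta,w,r}$. For the second factor, since $w\in A_1$ gives $w(y)^{-1}\le[w]_{A_1}|Q|/w(Q)$ for a.e.\ $y\in Q$, a direct computation yields the bound $|Q|/w(Q)^{1/r}$. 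Multiplying the three pieces and dividing by $|Q|$ produces the claimed $Cw(Q)^{-\beta/n}M_{\beta,w,r}(f)(x)$.

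For the second inequality, I would again apply Hölder with exponents $r,r'$ to factor the integrand into $|f|w^{1/r}$ and $|b-b_Q|w^{-1/r}$. The first factor is handled exactly as before, producing $w(Q)^{1/r-\beta/n}M_{\beta,w,r}(f)(x)$. For the second factor, using $w^{-r'/r}=w^{1-r'}$, the integral $\int_Q|b(y)-b_Q|^{r'}w(y)^{1-r'}\,dy$ is precisely what the norm $\|b\|_{Lip_{\beta,w}^{r'}}$ controls. By Lemma \ref{lem.t}, $\|b\|_{Lip_{\beta,w}^{r'}}\simeq\|b\|_{Lip_{\beta,w}}$, so this factor is bounded by $C\|b\|_{Lip_{\beta,w}}w(Q)^{1/r'+\beta/n}$. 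Combining the two factors the $w(Q)^{\pm\beta/n}$ powers cancel and $w(Q)^{1/r}w(Q)^{1/r'}=w(Q)$, leaving
$$\frac{1}{|Q|}\int_Q|b(y)-b_Q||f(y)|\,dy\le C\|b\|_{Lip_{\beta,w}}\frac{w(Q)}{|Q|}M_{\beta,w,r}(f)(x).$$
Since $x\in Q$, the quotient $w(Q)/|Q|$ is dominated by $M(w)(x)$, which finishes the proof.

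The only nontrivial step, and hence the main point of care, is matching the weight that Hölder's inequality naturally inserts with the weight that appears in the definition of $\|b\|_{Lip_{\beta,w}^{r'}}$; this is where the identity $1-r'=-r'/r$ and the equivalence of Lipschitz norms from Lemma \ref{lem.t} are essential. Everything else is a bookkeeping exercise with exponents of $w(Q)$.
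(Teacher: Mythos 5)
Your proof is correct. A point worth noting about the comparison: the paper does not actually prove this lemma --- it only cites \cite{lmw} ((2.14) and Lemma 2.6(iii)) and remarks that it replaces $w(x)$ by $M(w)(x)$ on the right-hand side --- so what you have supplied is a complete self-contained argument, and it is the standard one: H\"older with exponents $r,r'$, the $A_1$ lower bound $w(y)\ge w(Q)/([w]_{A_1}|Q|)$ a.e.\ on $Q$ for the first inequality, and the identity $w^{-r'/r}=w^{1-r'}$ together with the norm equivalence $Lip_{\beta,w}^{r'}=Lip_{\beta,w}$ from Lemma \ref{lem.t} for the second. Your route has the added merit that it produces exactly the paper's stated form: the factor $w(Q)/|Q|$ that survives after the cancellation of the $w(Q)^{\pm\beta/n}$ powers is bounded by $M(w)(x)$ for \emph{every} $x\in Q$, which is precisely why the paper states the inequality with $M(w)(x)$ rather than the a.e.\ bound $w(x)$ appearing in \cite{lmw}. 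One small point of polish: the lemma asserts a single constant valid for all $1<r<\infty$, while the equivalence constant you import from Lemma \ref{lem.t} a priori depends on the exponent $r'$; this is harmless either because the norms $\|b\|_{Lip_{\beta,w}^{p}}$ are nondecreasing in $p$ (so one can pass through the $p=\infty$ case to get a constant independent of $r'$), or simply because in every application in the paper a single fixed $r\in(1,p)$ suffices.
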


We remark that the first inequality is exactly (2.14) in \cite{lmw}.
 The second one is essentially proved in \cite{lmw} Lemma 2.6(iii)
  except that we replace $w(x)$ by $M(w)(x)$ in the right-hand side.

We now explain how the maximal commutator $M_b$
is controlled by the weighted fractional maximal function
when the symbol $b$ belongs to weighted Lipschitz space.

\begin{lemma} \label{lem.Mb}   
Let $0<\beta<1$, $w \in {A_1}$ and $b\in {Lip_{\beta,w}}$.
Then for any locally integrable function $f$ and any $1<r<\infty$,
$$M_b(f)(x)\le{C}\|b\|_{Lip_{\beta,w}}w(x)M_{\beta,w,r}(f)(x)~ \mbox{a.e.}~ x\in\rn.
$$
\end{lemma}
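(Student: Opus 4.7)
The plan is to bound $M_b(f)(x)$ pointwise at a.e. $x$ by fixing an arbitrary cube $Q\ni x$, splitting the integrand via the triangle inequality, and then invoking the previously established lemmas. Specifically, at a.e. $x\in\rn$ (namely, at every Lebesgue point of $b$ satisfying $M(w)(x)\le [w]_{A_1}w(x)$, which is a full-measure set since $w\in A_1$), I would write
\begin{equation*}
\frac{1}{|Q|}\int_Q |b(x)-b(y)||f(y)|dy
 \le |b(x)-b_Q|\cdot\frac{1}{|Q|}\int_Q |f(y)|dy
   +\frac{1}{|Q|}\int_Q |b(y)-b_Q||f(y)|dy.
\end{equation*}

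For the first term, Remark \ref{rem.lem.wlip} (the pointwise form of Lemma \ref{lem.wlip}) applied at such $x$ gives $|b(x)-b_Q|\le C\|b\|_{Lip_{\beta,w}} w(Q)^{\beta/n}w(x)$, while the first inequality in Lemma \ref{lem.lmw} yields $\frac{1}{|Q|}\int_Q|f(y)|dy\le Cw(Q)^{-\beta/n}M_{\beta,w,r}(f)(x)$; multiplying these, the $w(Q)^{\beta/n}$ factors cancel exactly, leaving $C\|b\|_{Lip_{\beta,w}}w(x)M_{\beta,w,r}(f)(x)$. For the second term, the second inequality in Lemma \ref{lem.lmw} delivers the bound $C\|b\|_{Lip_{\beta,w}}M(w)(x)M_{\beta,w,r}(f)(x)$, and the $A_1$ condition $M(w)(x)\le [w]_{A_1}w(x)$ converts $M(w)(x)$ into $w(x)$ up to a constant.

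Adding the two contributions and taking the supremum over all cubes $Q\ni x$ gives the desired pointwise estimate. The cancellation of $w(Q)^{\pm\beta/n}$ between the Lipschitz estimate for $|b(x)-b_Q|$ and the averaging inequality for $f$ is the key mechanism, but it is essentially immediate once the ingredients are laid out; there is no real obstacle here, since all the substantive work has been isolated in Lemmas \ref{lem.wlip} and \ref{lem.lmw} and in the $A_1$ property of $w$. The only point requiring mild care is ensuring that the exceptional set (non-Lebesgue points of $b$, or points where $M(w)>[w]_{A_1}w$) has measure zero, so that the bound holds a.e. rather than everywhere.
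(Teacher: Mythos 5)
Your argument is correct and follows essentially the same route as the paper: split $|b(x)-b(y)|$ via $b_Q$, bound $|b(x)-b_Q|$ by the pointwise weighted Lipschitz estimate (Remark \ref{rem.lem.wlip}), invoke the two inequalities of Lemma \ref{lem.lmw}, and use $M(w)(x)\le[w]_{A_1}w(x)$ at a.e.\ $x$ before taking the supremum over cubes. No gaps; your treatment of the exceptional null set is, if anything, slightly more explicit than the paper's.
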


\begin{proof}
Let $x$ be a Lebesgue point of $b$ and satisfy
 $M(w)(x)\le {[w]_{A_1}}w(x)$. Then,
  the proof of Lemma \ref{lem.wlip} shows that for any cube $Q\ni {x}$
   the following inequality holds
$$|b(x)-b_Q|\le {C}\|b\|_{Lip_{\beta,w}} w(Q)^{\beta/n}w(x).
$$
This together with Lemma \ref{lem.lmw} gives
\begin{align*}
\frac1{|Q|}\int_Q |b(x)-b(y)||f(y)|dy
 & \le |b(x)-b_Q|  \frac{1}{|Q|}\int_Q |f(y)|dy\\
  &\qquad +\frac1{|Q|}\int_Q |b(y)-b_Q||f(y)|dy\\
   & \le {C}\|f\|_{Lip_{\beta,w}}w(x)M_{\beta,w,r}(f)(x).
\end{align*}

Therefore, we deduce that
$$M_b(f)(x)\le{C}\|b\|_{Lip_{\beta,w}}w(x)M_{\beta,w,r}(f)(x), ~\mbox{a.e.}~x\in\rn.
$$
Then we conclude the proof.
\end{proof}

The following two lemmas show that the maximal commutator
 $M_b$ pointwise controls
 the nonlinear commutators $[b,M]$ and  $[b,M^{\sharp}]$.
  See (3.1) and (3.2) in \cite{zw-mia}, respectively.

\begin{lemma} \label{lem.thm1.2-zw-mia}  
Let $b$ and $f$ be locally integrable functions
 and $b\ge0$ a.e. in $\rn$.
Then for any fixed $x\in\rn$ such that $M(f)(x)<\infty$
 and $|b(x)|<\infty$,
we have
\begin{equation}  \label{equ.thm1.2-zw-mia}
 |[b,M](f)(x)|\le M_b(f)(x).
\end{equation}
\end{lemma}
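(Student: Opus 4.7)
The plan is to prove the pointwise inequality by expanding $[b,M](f)(x) = b(x)M(f)(x) - M(bf)(x)$ and establishing two one-sided bounds that together give the absolute-value estimate. The hypothesis $b\ge 0$ will be used to rewrite $M(bf)(x)$ as the supremum of averages of $b(y)|f(y)|$ (rather than $|b(y)f(y)|$), which is exactly what allows the algebraic cancellation below to go through.

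Fix any cube $Q\ni x$. Using that $b\ge 0$ a.e., the trivial identity
$$b(x)\frac{1}{|Q|}\int_Q |f(y)|\,dy - \frac{1}{|Q|}\int_Q b(y)|f(y)|\,dy = \frac{1}{|Q|}\int_Q (b(x)-b(y))|f(y)|\,dy$$
together with the triangle inequality gives
$$\left|b(x)\frac{1}{|Q|}\int_Q|f(y)|\,dy - \frac{1}{|Q|}\int_Q b(y)|f(y)|\,dy\right| \le \frac{1}{|Q|}\int_Q |b(x)-b(y)||f(y)|\,dy \le M_b(f)(x).$$
From this I would extract the two directional estimates
$$b(x)\frac{1}{|Q|}\int_Q|f(y)|\,dy \le M(bf)(x) + M_b(f)(x), \qquad \frac{1}{|Q|}\int_Q b(y)|f(y)|\,dy \le b(x)M(f)(x) + M_b(f)(x),$$
the right-hand sides of which are independent of $Q$.

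Taking the supremum over cubes $Q\ni x$ on each left-hand side, and using that $b(x)\ge 0$ (since the conclusion is needed only at points where the hypotheses of the lemma apply, which for a.e.~$x$ include $b(x)\ge 0$) so that $b(x)$ commutes with the supremum in the first inequality, I obtain
$$b(x)M(f)(x) - M(bf)(x) \le M_b(f)(x) \quad \text{and} \quad M(bf)(x) - b(x)M(f)(x) \le M_b(f)(x).$$
The finiteness hypotheses $M(f)(x)<\infty$ and $|b(x)|<\infty$ ensure that both differences are well defined, and combining the two displays produces exactly (\ref{equ.thm1.2-zw-mia}). I do not anticipate any real obstacle here; the only mildly delicate point is the interchange of $b(x)$ with the supremum, which is legitimate once one notes that the lemma is applied at points where $b(x)\ge 0$.
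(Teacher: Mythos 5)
Your proof is correct, and it is the standard argument for this inequality: split $b(x)M(f)(x)-M(bf)(x)$ into two one-sided bounds via the pointwise estimate $\frac{1}{|Q|}\int_Q|b(x)-b(y)||f(y)|\,dy\le M_b(f)(x)$, using $b\ge 0$ a.e.\ to write $M(bf)(x)=\sup_Q\frac{1}{|Q|}\int_Q b(y)|f(y)|\,dy$. The paper does not reprove this lemma but quotes it as (3.1) of \cite{zw-mia}, where essentially the same computation is carried out; your caveat that $b(x)\ge 0$ is needed at the fixed point $x$ is consistent with how the result is actually used (a.e.\ $x$, cf.\ Remark \ref{rem.lem.2.6-2.7}).
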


\begin{lemma} \label{lem.thm1.3-zw-mia}  
Let $b$ and $f$ be locally integrable functions
 and $b\ge0$ a.e. in $\rn$.
Then for any fixed $x\in\rn$ such that $M^{\sharp}(f)(x)<\infty$
 and $|b(x)|<\infty$,
we have
\begin{equation}  \label{equ.thm1.3-zw-mia}
|[b,M^{\sharp}](f)(x)|\le {2} M_b(f)(x).
\end{equation}
\end{lemma}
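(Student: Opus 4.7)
The plan is to bound $|[b,M^{\sharp}](f)(x)|$ directly from the definitions by exploiting the nonnegativity of $b$ to realize $b(x) M^{\sharp}(f)(x)$ as a supremum structurally parallel to $M^{\sharp}(bf)(x)$, so that the difference of suprema collapses onto an integrand that is bounded pointwise by $2|b(x)-b(y)||f(y)|$.

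First I would fix $x$ with $M^{\sharp}(f)(x) < \infty$ and $|b(x)| < \infty$, and note that one may assume $M_b(f)(x) < \infty$, else the inequality is trivial. Since $b(x) \ge 0$, I would rewrite
\[
b(x)\, M^{\sharp}(f)(x) \;=\; \sup_{Q \ni x} \frac{1}{|Q|}\int_Q \bigl|b(x) f(y) - b(x) f_Q\bigr|\,dy,
\]
so that both $b(x) M^{\sharp}(f)(x)$ and $M^{\sharp}(bf)(x)$ are suprema of nonnegative quantities indexed by the same family of cubes.

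Next I would apply the elementary inequality $|\sup_Q \alpha_Q - \sup_Q \gamma_Q| \le \sup_Q |\alpha_Q - \gamma_Q|$ (valid when both suprema are finite) to these two suprema. For each cube $Q \ni x$, the absolute difference of the two averages is controlled, after pulling the absolute value inside and using the reverse triangle inequality, by
\[
\frac{1}{|Q|}\int_Q \bigl|[b(x)-b(y)] f(y) \;-\; [b(x) f_Q - (bf)_Q]\bigr|\,dy.
\]
A further triangle inequality splits this into a term $\frac{1}{|Q|}\int_Q |b(x)-b(y)||f(y)|\,dy$ and a constant term $|b(x) f_Q - (bf)_Q|$. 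The key observation is that
\[
|b(x) f_Q - (bf)_Q| \;=\; \Bigl|\frac{1}{|Q|}\int_Q (b(x)-b(y)) f(y)\,dy\Bigr| \;\le\; \frac{1}{|Q|}\int_Q |b(x)-b(y)||f(y)|\,dy,
\]
so the two contributions combine into $\frac{2}{|Q|}\int_Q |b(x)-b(y)||f(y)|\,dy \le 2 M_b(f)(x)$. Taking the supremum over $Q$ then yields (\ref{equ.thm1.3-zw-mia}).

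The main obstacle, such as it is, lies in the order of operations: one must first absorb $b(x)$ inside the absolute value (using $b(x) \ge 0$) before applying the $|\sup-\sup| \le \sup|\cdot - \cdot|$ bound, otherwise one is left with an extra term of the form $b(x) M^{\sharp}(f)(x)$ that cannot be controlled by $M_b(f)$. The factor of $2$ in the conclusion, as opposed to the factor $1$ appearing in Lemma \ref{lem.thm1.2-zw-mia} for $[b,M]$, is an honest consequence of the mean-zero correction $(bf)_Q$ that has no analogue in the pointwise maximal function, and accounting for it is precisely what the second triangle inequality above is for.
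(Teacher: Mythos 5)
Your proof is correct and is essentially the same argument as the one behind inequality (3.2) of \cite{zw-mia}, which this paper only cites rather than reproves: use $b(x)\ge 0$ to pull $b(x)$ inside the absolute value, compare the two averages cube by cube, and absorb the correction term $|b(x)f_Q-(bf)_Q|$ by a second application of the same bound, which is exactly where the factor $2$ comes from. The only detail worth making explicit is that your per-cube estimate already gives $M^{\sharp}(bf)(x)\le b(x)M^{\sharp}(f)(x)+2M_b(f)(x)<\infty$ once $M_b(f)(x)<\infty$, so the step $|\sup_Q\alpha_Q-\sup_Q\gamma_Q|\le\sup_Q|\alpha_Q-\gamma_Q|$ is indeed legitimate.
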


\begin{remark}        \label{rem.lem.2.6-2.7}
Observe that locally integrable function is finite a.e. in $\rn$.
Then (\ref{equ.thm1.2-zw-mia}) holds for a.e. $x\in\rn$ when $M(f)$ is finite a.e. in $\rn$.
 So does (\ref{equ.thm1.3-zw-mia}) when $M^{\sharp}(f)$ is finite a.e. in $\rn$.
\end{remark}

\section{Proof of Theorems \ref{thm.1.1} and \ref{thm.1.2}}  

\begin{proof}[Proof of Theorem \ref{thm.1.1}] 
$(1)\Longrightarrow(2)$ For any $1<r<p<n/\beta$,
  it follows from Lemma \ref{lem.Mb} and Lemma \ref{lem.wfrac} that
\begin{align*}
\|M_b(f)\|_{L^q(\mu^{1-q})}
  & \le {C}\|b\|_{Lip_{\beta,\mu}}\|\mu{M_{\beta,\mu,r}}(f)\|_{L^q(\mu^{1-q})}\\
    &= {C}\|b\|_{Lip_{\beta,\mu}}\|{M_{\beta,\mu,r}}(f)\|_{L^q(\mu)}\\
     &\le {C}\|b\|_{Lip_{\beta,\mu}}\|f\|_{L^p(\mu)}.
\end{align*}

$(2)\Longrightarrow(1)$
 For any cube $Q\subset {{\mathbb{R}}^n}$, we have
\begin{equation}        \label{equ.thm.1-1}
\begin{split}
\frac{1}{\mu(Q)^{1+\beta/n}} \int_Q |b(x)-b_Q| dx
     &\leq \frac{1}{\mu(Q)^{1+\beta/n}}
        \int_Q \bigg(\frac{1}{|Q|}\int_Q |b(x)-b(y)|dy \bigg)dx\\
     &= \frac{1}{\mu(Q)^{1+\beta/n}}
            \int_Q \bigg(\frac{1}{|Q|}\int_Q |b(x)-b(y)|\chi_Q(y)dy\bigg)dx\\
     &\leq \frac{1}{\mu (Q)^{1+\beta/n}} \int_Q {M_b}(\chi_Q)(x) dx.
\end{split}
\end{equation}

Applying H\"older's inequality with exponents $q$ and $q'=q/(1-q)$
and assertion (2), and noticing that $1/p-1/q=\beta/n$, we deduce that
\begin{align*}
 \int_B {M_b}(\chi_Q)(x) dx
  & = \int_Q {{M_b} (\chi_Q)(x)} \mu(x)^{(1-q)/q} \mu(x)^{(q-1)/q}dx\\
   &\le \bigg(\int_Q |M_b(\chi _Q)(x)|^q\mu(x)^{1-q}dx\bigg)^
           {1/q}\bigg(\int_Q \mu(x)dx\bigg)^{1-1/q}\\
    &\le {C}\|M_b(\chi_Q)\|_{{L^q} (\mu^{1-q})} \mu(Q)^{1-1/q}\\
     &\leq{C} \|\chi_Q\|_{{L^p}(\mu)} \mu(Q)^{1-{1}/{q}}\\
      &\le {C} \mu(Q)^{1/p+1-1/q} \\
        &\le {C} \mu(Q)^{1+\beta/n}.
\end{align*}
This together with (\ref{equ.thm.1-1}) yields
$$\frac{1}{\mu(Q)^{1+\beta/n}} \int_Q |b(x)-b_Q| dx\le{C}.
$$
Thus we have $b\in {Lip_{\beta,\mu}}$
since the constant $C$ above is independent of $Q$.

The proof of  Theorem \ref{thm.1.1} is finished.
\end{proof}

\begin{proof}[Proof of Theorem \ref{thm.1.2}]  
$(1)\Longrightarrow(2)$
 Observe that the weighted norm inequalities for
  the Hardy-Littlewood maximal function guarantee
   that $M(f)(x)$ is finite a.e. in $\rn$ when $f\in L^p(\mu)$ and $\mu\in A_1$.
Then, it follows from Lemma \ref{lem.thm1.2-zw-mia}
 and Remark \ref{rem.lem.2.6-2.7} that
  $$|[b,M](f)(x)|\leq M_b(f)(x)~  \mbox{~a.e.}~x\in\rn.
  $$

Therefore, by Theorem \ref{thm.1.1} we get that
 $[b,M]$ is bounded from ${L^p}(\mu)$ into ${L^q}(\mu^{1-q})$.


$(2)\Longrightarrow(3)$  For any fixed $Q \subset\rn$ and $x \in{Q}$,
we have (see \cite{bmr} page 3331 or (2.4) in \cite{zw2009})
$$M (\chi_Q)(x) = {\chi_Q}(x) \hbox{~~and~~} M (b \chi_Q)(x) = {M_Q}(b)(x).
$$

Noting that $1/q=1/p-\beta/n$ and applying assertion (2), we obtain
\begin{align*}
 & \frac{1}{\mu (Q)^{\beta /n}}
      \bigg(\frac{1}{\mu(Q)}\int_Q |{b(x)-M_Q(b)(x)|^q \mu(x)^{1-q}dx \bigg)^{1/q}}\\
  & = \frac{1}{\mu(Q)^{1/p}}
       \bigg(\int_Q |b(x)M (\chi_Q)(x)-{M}(b\chi_Q)(x)|^q {\mu(x)}^{1-q}dx\bigg)^{1/q}\\
    & =  \frac{1}{\mu(Q)^{1/p}} \bigg(\int_Q |[b,M](\chi_Q )|^q {\mu(x)}^{1-q}dx\bigg)^{1/q}\\
     & \leq \frac{1}{\mu(Q)^{1/p}} \|[b,M](\chi_Q)\|_{L^q({\mu}^{1-q})}\\
      & \leq C\mu(Q)^{-1/p}\|\chi_Q\|_{L^p(\mu)}\\
       &\leq C,
\end{align*}
which implies assertion (3) since the constant $C$ is independent of $Q$.

$(3)\Longrightarrow(1)$
 We first prove that assertion (3) implies $b \in {Li{p_{\beta ,\mu}}}$.
  Given any cube $Q$, let
 $$E =\{x \in {Q}: b(x) \leq {b_Q}\}~~\hbox{and}~~ F =\{x \in {Q}: b(x) > {b_Q}\}.
 $$
 Then (see \cite{bmr} page 3331)
 $$\int_E |b(x)-{b_Q}|dx = \int_F |b(x)-{b_Q}| dx.
 $$

Since for any $x \in{E}$ we have $b(x)\leq {b_Q} \leq {M_Q}(b)(x)$,
 then
$$|b(x)-b_Q|\leq |b(x) - {M_Q}(b)(x)|, ~ x\in{E}.
$$
Therefore,
\begin{equation}   \label{equ.thm.2-2}
\begin{split}
\frac{1}{\mu(Q)^{1+\beta/n}}\int_Q {|b(x)-b_Q|} dx
 &=\frac{1}{\mu(Q)^{1+\beta/n}}\int_{E\cup F} |b(x)-b_Q| dx\\
  & =\frac{2}{{\mu(Q)}^{1+\beta/n}} \int_E {|b(x)-b_Q|} dx\\
   & \leq \frac{2}{{\mu(Q)}^{1+\beta/n}} \int_E {|b(x)-{M_Q}(b)(x)|} dx\\
    & \leq \frac{2}{{\mu(Q)}^{1+\beta/n}}\int_Q {|b(x)-{M_Q}(b)(x)|} dx.
\end{split}
\end{equation}

Applying H\"older's inequality and making use of assertion (3), we have
\begin{equation}   \label{equ.thm.2-3}
\begin{split}
& \frac{1}{\mu(Q)^{1+\beta/n}} \int_Q {|b(x)-M_Q(b)(x)|} dx\\
 & \leq \frac{1}{{\mu(Q)}^{1+\beta/n}}
    \bigg(\int_Q |b(x)-{M_Q}(b)(x)|^q{\mu(x)}^{1-q} dx\bigg)^{1/q}
      \bigg(\int_Q \mu(x) dx\bigg)^{1-1/q}\\
   & = \frac{1}{{\mu(Q)}^{\beta/n}}
       \bigg(\frac{1}{\mu(Q)} \int_Q |b(x)-{M_Q}(b)(x)|^q{\mu(x)}^{1-q} dx\bigg)^{1/q}\\
    & \le {C}.
\end{split}
\end{equation}

Combining (\ref{equ.thm.2-2}) and (\ref{equ.thm.2-3}), we deduce that
there is a constant $C>0$ independent of $Q$ such that
$$\frac{1}{\mu(Q)^{1+\beta/n}}\int_Q {|b(x)-b_Q|} dx\le {C},
$$
and hence $b\in Li{p_{\beta,\mu}}$.

Next we prove $b\geq0$ for $a.e.~x\in \rn$.
 Denote by $b^{+}= \max\{b,0\}$ and $b^{-}=|b|-{b^{+}}$, then
  $b = b^{+} - b^{-}$.
It suffices to verify $b^{-} = 0$ for $a.e.~x\in \rn$.

For any cube $Q$, noticing that
  $0 \leq {b^+}(x)\leq |b(x)| \leq {M_Q}(b)(x),$
 we have, for $a.e.~x\in{Q}$,
$$0 \leq {b^{-}}(x)\leq {M_Q}(b)(x)-{b^{+}}(x) + {b^{-}}(x)
    = {M_Q}(b)(x)-b(x).
$$
This, together with (\ref{equ.thm.2-3}), gives
\begin{align*}
 \frac{1}{\mu(Q)^{1+\beta/n}}\int_Q {b^-}(x) dx
   \leq \frac{1}{\mu(Q)^{1+\beta/n}}\int_Q |M_Q(b)(x)-b(x)| dx
     \leq C.
\end{align*}

Therefore we have
\begin{equation}     \label{equ.thm.2-4}
\begin{split}
 \frac{1}{|Q|}\int_Q {b^-}(x) dx
  &\leq C|Q|^{\beta/n}\bigg(\frac{\mu {(Q)}}{|Q|}\bigg)^{1+\beta/n}\\
   &= C|Q|^{\beta/n}\bigg(\frac{1}{|Q|}\int_{Q}\mu(x)dx\bigg)^{1+\beta/n}.
\end{split}
\end{equation}

Now, we show that (\ref{equ.thm.2-4}) implies
 $b^{-}=0$ a.e. in $\rn$.
Indeed, if we denote by ${\mathcal{L}}(b)$ and $\mathcal{L}(\mu)$
the sets of all Lebesgue points of $b$ and $\mu$,
 then $\rn\setminus\big({\mathcal{L}}(b)\cap \mathcal{L}(\mu)\big)$
is a set of measure zero.

For any $x_0\in {\mathcal{L}}(b)\cap \mathcal{L}(\mu)$,
 it follows from Lebesgue's differentiation theorem that
$$\lim_{\substack{|Q|\to 0\\ Q \ni {x_0}}}
    \frac{1}{|Q|}\int_Q b^{-}(x)dx  = b^{-}(x_0)
$$
and
$$\lim_{\substack{|Q|\to 0\\ Q \ni {x_0}}}
      \frac{1}{|Q|}\int_Q \mu(x)dx = \mu(x_0).
$$

Letting $|Q| \to 0$ with $Q \ni {x_0}$ in (\ref{equ.thm.2-4}),
then the left-hand side term tends to $b^{-}(x_0)$ and
 the right-hand side term tends to zero.
 This concludes that $b^-(x_0)=0$.
  Therefore we achieve $b^{-}(x)=0$ for $a. e.~x\in \rn$,
  which gives $b\ge 0$ a.e. in $\rn$.

The proof of Theorem  \ref{thm.1.2} is completed.
\end{proof}

\begin{remark}  \label{rem.thm.1.2} 
Observe that
 the restriction $q\ge n/(n-\beta)$ is not needed
  in the proof of the implication $(3)\Rightarrow(1)$,
although it is implied in the relationships $1<p<n/\beta$
 and $1/q=1/p-\beta/n$.
Therefore, we actually obtained the fact that
 if (\ref{equ.thm.2-0}) holds for some $q\ge1$
     then we also have $b \in Li{p_{\beta ,\mu }}$ and $b\ge 0$ a.e. in $\rn$.
\end{remark}

\begin{proof}[Proof of Corollary \ref{cor.1}]  
The implication $(3)\Rightarrow (2)$ follows readily
and $(2)\Rightarrow (1)$ is implied in Remark \ref{rem.thm.1.2}.
 Therefore, we need to check the implication $(1)\Rightarrow(3)$.
  Suppose that assertion (1) holds.
  We will prove (\ref{equ.cor.1}) for all $1\le{s}<\infty$.

 If $n/(n-\beta)<s<\infty$, we let $r$ be such that
 $1/s=1/r-\beta/n$. Obviously $1<r<n/\beta$.
  Thus, assertion (1) together with Theorem \ref{thm.1.2} (3)
   show that (\ref{equ.cor.1}) holds for $n/(n-\beta)<s<\infty$.

 If $1\le {s}\le n/(n-\beta)$, we select $q$ such that $n/(n-\beta)<q<\infty$.
   Then, it follows from Theorem \ref{thm.1.2} that (\ref{equ.cor.1}) holds for this $q$.
 Now, for any fixed cube $Q\subset\rn$,
 we apply H\"older's inequality with $q/s$ and its conjugate exponent $(q/s)'$
  to deduce
\begin{equation*}
\begin{split}
& \frac{1}{\mu(Q)^{\beta/n}}
 \bigg(\frac{1}{\mu(Q)}\int_Q |b(x)-M_Q(b)(x)|^s \mu(x)^{1-s} dx \bigg)^{1/s}\\
 &= \frac{1}{\mu(Q)^{\beta/n}}
 \bigg(\frac{1}{\mu(Q)}\int_Q |b(x)-M_Q(b)(x)|^s \mu(x)^{(1-q)s/q} \mu(x)^{(q-s)/q} dx \bigg)^{1/s}\\
  &\le \frac{1}{\mu(Q)^{\beta/n}}
   \bigg(\frac{1}{\mu(Q)}\int_Q |b(x)-M_Q(b)(x)|^q \mu(x)^{1-q} dx \bigg)^{1/q}\\
  &\le {C},
\end{split}
\end{equation*}
 which shows that (\ref{equ.cor.1}) also holds when $1\le {s}\le n/(n-\beta)$,
since $C$ is independent of $Q$.

Then the required conclusion follows.
\end{proof}

\section{Proof of Theorem \ref{thm.1.3}}   

\begin{proof}[Proof of Theorem \ref{thm.1.3}]  
$(1)\Longrightarrow(2)$ Noting the fact that
 $M^{\sharp}(f)(x)<\infty$ a.e. $x\in\rn$ when $f\in L^p(\mu)$
and $\mu\in {A_1}$,
then it follows from Lemma \ref{lem.thm1.3-zw-mia},
 Remark \ref{rem.lem.2.6-2.7} and Theorem \ref{thm.1.1} that
$$ \|[b,M^{\sharp}](f)\|_{L^{q}(\mu^{1-q})}
 \le {C}\|M_b(f)\|_{L^{q}(\mu^{1-q})} \le {C}\|f\|_{L^{p}(\mu)}.
$$

$(2)\Longrightarrow(3)$
For any fixed cube $Q$, we have (see \cite{bmr} page 3333  or \cite{zw-mia} page 1383)
$$ M^{\sharp}(\chi_Q)(x) =\frac{1}{2},  ~~\mbox{for~all}~ x\in{Q}.
$$
  Therefore, by assertion (2) and noting that $1/q=1/p-\beta/n$, we obtain
\begin{align*}
&\frac{1}{\mu(Q)^{\beta/n}} \bigg(\frac{1}{\mu(Q)}
       \int_Q {|b(x)-2{M^{\sharp}}(b\chi_Q)(x)|^q \mu(x)^{1-q} dx}\bigg)^{1/q} \\
 &= \frac{2}{\mu(Q)^{\beta/n+1/q}}
    \bigg(\int_Q {\bigg|\frac{1}{2}b(x)-{M^{\sharp}}(b\chi_Q)(x)\bigg|^q \mu(x)^{1-q} dx}\bigg)^{1/q} \\
  &=\frac{2}{\mu(Q)^{1/p}} \bigg(\int_Q {|b(x){M^{\sharp}}(\chi_Q)(x)
          -{M^{\sharp}}(b\chi_Q)(x)|^q \mu(x)^{1-q} dx}\bigg)^{1/q} \\
   &\le \frac{2}{\mu(Q)^{1/p}}
              \|[b,M^{\sharp}](\chi_Q)\|_{L^q({\mu}^{1 - q})}\\
    &\le \frac{C}{\mu(Q)^{1/p}} \|\chi_Q\|_{L^p(\mu)}\\
     &\le {C}.
\end{align*}
Since the constant $C$ is independent of $Q$,
this concludes the proof that (2) implies (3).

(3)$\Longrightarrow$(1). We first proof $b \in {Li{p_{\beta ,\mu}}}$.
 For any cube $Q\subset\rn$, we have (see (2) in \cite{bmr})
\begin{equation}        \label{equ.thm.3-1}
|b_Q|\leq 2M^{\sharp}(b\chi_Q)(x), ~x\in Q.
\end{equation}

Let $E=\{x\in Q: b(x)\le b_B\}$, then
$$\int_E |b(x)-b_Q|dx =\int_{Q\setminus{E}} |b(x)-b_Q|dx.
$$

Since for any $ x\in E$, we have $b(x)\le b_Q\le |b_Q| \le 2
M^{\sharp}(b\chi_Q)(x)$, then
$$|b(x)- b_Q|\le |b(x)-2  M^{\sharp}(b\chi_Q)(x)|,
~~\hbox{for}~ x\in {E}.
$$

Similar to (\ref{equ.thm.2-2}) and (\ref{equ.thm.2-3}),
 applying H\"older's inequality and assertion (3),  we have
\begin{align*} 
& \frac{1}{\mu(Q)^{1+\beta/n}} \int_{Q}|b(x)-b_Q|dx\\
 &= \frac{2}{\mu(Q)^{1+\beta/n}} \int_E |b(x)-b_Q|dx\\
 &\le \frac{2}{\mu(Q)^{1+\beta/n}} \int_{E}|b(x)-2M^{\sharp}(b\chi_Q)(x)|dx\\
  &\le \frac{2}{\mu(Q)^{1+\beta/n}} \int_{Q} |b(x)-2M^{\sharp}(b\chi_Q)(x)|dx\\
   & \leq \frac{1}{{\mu(Q )}^{1+\beta/n}}
    \bigg(\int_Q |b(x)-2M^{\sharp}(b\chi_Q)(x)|^q{\mu(x)}^{1-q} dx\bigg)^{1/q}
      \bigg(\int_Q \mu(x) dx\bigg)^{1-1/q}\\
   & = \frac{1}{{\mu(Q)}^{\beta/n}} \bigg(\frac{1}{\mu(Q)}
        \int_Q |b(x)-2M^{\sharp}(b\chi_Q)(x)|^q{\mu(x)}^{1-q} dx\bigg)^{1/q}\\
    & \le {C},
\end{align*}
which implies $b\in{Lip_{\beta,\mu}}$ since $C$ is independent of $Q$.

Now, let us prove $b\ge 0$ a.e. in $\rn$.
 It also suffices to show $b^{-}=0$ a.e. in $\rn$.
By (\ref{equ.thm.3-1}) we have, for $x\in Q$,
$$2M^{\sharp}(b\chi_Q)(x)-b(x) \ge |b_Q|-b(x)
 = |b_Q|-b^{+}(x)+b^{-}(x).
$$
Then
\begin{equation}         \label{equ.thm.3-2}  
\begin{split}
\frac{1}{|Q|} \int_{Q} \big|2 M^{\sharp} (b\chi_Q)(x)-b(x)\big|dx
&\ge \frac{1}{|Q|}\int_{Q} \big(2 M^{\sharp}(b\chi_Q)(x)-b(x)\big)dx \\
&\ge \frac{1}{|Q|}\int_{Q} \big(|b_Q|
  - b^{+}(x)+b^{-}(x)\big)dx  \\
&= |b_Q| -\frac{1}{|Q|}\int_{Q} b^{+}(x)dx
  + \frac{1}{|Q|}\int_{Q} b^{-}(x)dx.
\end{split}
\end{equation}

On the other hand, by H\"older's inequality and
 assertion (3), we have
\begin{equation*} 
\begin{split}
&\frac{1}{|Q|}\int_Q |2M^{\sharp}(b\chi_Q)(x)-b(x)| dx\\
 &=\frac{1}{|Q|} \int_Q |2M^{\sharp}(b\chi_Q)(x)-b(x)|
    \mu(x)^{(1-q)/q} \mu(x)^{(q-1)/q}dx\\
  &\leq\frac{\mu(Q)}{|Q|}\bigg(\frac{1}{\mu(Q)}
       \int_Q |2M^{\sharp}(b\chi_Q)(x)-b(x)|^q\mu(x)^{1-q} dx\bigg)^{1/q}\\
   & \le {C} \frac{\mu(Q)}{|Q|} \mu(Q)^{\beta/n}\\
     & ={C} |Q|^{\beta/n} \bigg(\frac{1}{|Q|}\int_Q\mu(x)dx \bigg)^{1+\beta/n},
\end{split}
\end{equation*}
where the constant $C$ is independent of $Q$.
 This, together with (\ref{equ.thm.3-2}), gives
\begin{equation}         \label{equ.thm.3-3}
|b_Q| -\frac{1}{|Q|}\int_{Q} b^{+}(x)dx
  + \frac{1}{|Q|}\int_{Q} b^{-}(x)dx
 \le {C} |Q|^{\beta/n} \bigg(\frac{1}{|Q|}\int_Q\mu(x)dx \bigg)^{1+\beta/n}.
\end{equation}

Let $x_0\in \rn$ belong to the intersection
 of the Lebesgue sets of $b, b^{+}, b^{-}$ and $\mu$.
  Letting $|Q|\to0$ with $Q\ni x_0$,
  then it follows form Lebesgue's differentiation theorem that
the limit of the left-hand side of (\ref{equ.thm.3-3}) equals to
$$|b(x_0)|-b^{+}(x_0)+b^{-}(x_0)=2b^{-}(x_0)=2|b^{-}(x_0)|,
$$
and the right-hand side of (\ref{equ.thm.3-3}) tends to $0$.
  Hence, we have $b^{-}(x_0)=0$.

Therefore, the proof of Theorem \ref{thm.1.3} is completed.
\end{proof}

\begin{proof}[Proof of Corollary \ref{cor.2}]  
Arguing in a similar way as the proof of Corollary \ref{cor.1},
we can get Corollary \ref{cor.2}.
We leave the details to the readers.
\end{proof}

\bigskip
{\bf Acknowledgments~}
   This work was partly supported by 
    the Fundamental Research Funds for Education Department of Heilongjiang Province (NO. 1453ZD031).

\medskip
{\bf Data availibility}
This research paper does not involve any data.

{\bf Competing interests~}
  The authors declare that they have no competing interests.

\medskip
{\bf Authors' contributions~}
All authors contributed equally to this work. All authors read
the final manuscript and approved its submission.

\end{document}